\documentclass[11pt]{amsart}
\usepackage{amsfonts}

\usepackage[margin=1in]{geometry}
\usepackage{amsmath,amssymb,amsthm,mathtools}
\usepackage[colorlinks=true,linkcolor=blue,citecolor=blue,urlcolor=blue]{hyperref}

\numberwithin{equation}{section}
\newtheorem{theorem}{Theorem}[section]
\newtheorem{proposition}[theorem]{Proposition}
\newtheorem{lemma}[theorem]{Lemma}
\newtheorem{corollary}[theorem]{Corollary}

\newtheorem*{antonovthm}{Theorem A (Antonov)}

\newcommand{\et}[1]{\mathrm e\!\left(#1\right)}

\newcommand{\func}[1]{\operatorname{#1}}

\begin{document}
\title[Almost everywhere divergence]{Almost everywhere divergence of double Fourier series
	along shrinking conical regions}
\author{Ushangi Goginava}
\address{Department of Mathematical Sciences \\
	United Arab Emirates University, P.O. Box No. 15551\\
	Al Ain, Abu Dhabi, UAE}
\email{zazagoginava@gmail.com; ugoginava@uaeu.ac.ae}

\begin{abstract}
We study almost everywhere divergence of rectangular partial sums of double
trigonometric Fourier series along variable regions concentrated near the
diagonal. Fefferman's theorem shows that, in two variables, unrestricted
rectangular summation is radically different from the one-dimensional
Carleson-Hunt theory: there are continuous functions whose rectangular
Fourier sums diverge everywhere. Bakhvalov proved that this phenomenon
persists even when the indices are restricted to a fixed cone-shaped
neighbourhood of the diagonal. On the other hand, the diagonal summation
results of Tevzadze and Fefferman, and the later theorem of Antonov for
shrinking cones, show that convergence is restored when the aperture is of
order $1/n$.

We prove that Antonov's aperture condition is sharp in the $L^{2}$-scale.
For every positive nonincreasing sequence $\{\lambda _{n}\}$ with $%
\sup_{n}n\lambda _{n}=\infty $, we construct a function $f\in L^{2}(\mathbb{T%
}^{2})$ whose symmetric rectangular Fourier partial sums fail to converge
almost everywhere along the corresponding variable cone. The construction
combines a Fefferman-type analytic block with frequency separation and
independent random translations. We also observe that the sufficiency part
of Antonov's theorem does not require monotonicity of $\{\lambda _{n}\}$.
\end{abstract}

\subjclass[2020]{42B05, 42B08, 42B35, 60G50}
\keywords{Double Fourier series, rectangular partial sums, almost everywhere
divergence, cone summation, variable diagonal regions, Fefferman
construction, random translations}
\maketitle

\section{Introduction}

Let 
\begin{equation*}
\mathbb{T}:=\mathbb{R}/\mathbb{Z}\simeq [0,1) 
\end{equation*}
be the one-dimensional torus with normalized Lebesgue measure, and write 
\begin{equation*}
\mathrm{e}(t):=\exp(2\pi i t). 
\end{equation*}
For $f\in L^1(\mathbb{T}^2)$ we use the Fourier coefficients 
\begin{equation*}
\widehat f(m,n):=\int_{\mathbb{T}^2} f(x,y)\mathrm{e}(-mx-ny)\,dx\,dy,
\qquad (m,n)\in\mathbb{Z}^2. 
\end{equation*}
The rectangular Fourier partial sums considered in this paper are the
symmetric sums 
\begin{equation}  \label{eq:symmetric-sums}
S_{M,L}(f;x,y):= \sum_{m=-M}^{M}\sum_{n=-L}^{L}\widehat f(m,n)\mathrm{e}%
(mx+ny), \qquad M,L\in\mathbb{N}_0 .
\end{equation}
For the proof of the analytic block estimate we also use the auxiliary
one-sided truncation 
\begin{equation}  \label{eq:auxiliary-R}
R_{M,L}(g;x,y):= \sum_{m=0}^{M-1}\sum_{n=0}^{L-1}\widehat g(m,n)\mathrm{e}%
(mx+ny), \qquad M,L\in\mathbb{N }.
\end{equation}
If $\widehat g(m,n)=0$ unless $m,n\ge0$, then 
\begin{equation}  \label{eq:S-R-relation}
S_{M-1,L-1}(g;x,y)=R_{M,L}(g;x,y), \qquad M,L\in\mathbb{N }.
\end{equation}
Thus the theorem is stated for the symmetric sums \eqref{eq:symmetric-sums},
while \eqref{eq:auxiliary-R} is used only for analytic polynomials with
nonnegative spectrum.

The pointwise convergence problem for trigonometric Fourier series goes back
to Luzin's problem. In one dimension, the examples of Kolmogorov and the
positive theorems of Carleson and Hunt established the central contrast
between divergence in $L^1$ and convergence in $L^p$, $p>1$ \cite%
{Carleson1966,Hunt1968,Kolmogoroff1923,Kolmogoroff1926,Luzin1951}. In
several variables the summation method becomes decisive. Diagonal, or cubic,
summation has positive almost everywhere convergence theorems due to
Tevzadze and Fefferman, with further refinements in Orlicz scales \cite%
{Fefferman1971Convergence,Sjolin1971,Tevzadze1970}. By contrast, Fefferman
proved that unrestricted rectangular summation admits continuous functions
of two variables whose rectangular Fourier sums diverge everywhere \cite%
{Fefferman1971Divergence}. Bakhvalov later showed that the same divergence
phenomenon persists even when the rectangular indices are confined to a
fixed cone-shaped neighbourhood of the diagonal \cite%
{Bakhvalov1997,Bakhvalov2002}. Related endpoint refinements, Orlicz-type
convergence theorems, and divergence constructions may be found in \cite%
{Antonov1996,Antonov2004,Antonov2009,Antonov2014,BakhbukhNikishin1973,
Konyagin1995,Konyagin2000,Sjolin1969,Stein1961,Zygmund1959}.

We shall use the following Orlicz notation. Put 
\begin{equation*}
\Phi_{2}(u):=u\bigl(\log(e+u)\bigr)^2  \log\log\log\bigl(e^{e^e}+u\bigr)%
,\qquad u\ge0. 
\end{equation*}
The Orlicz class $L^{\Phi_{2}}(\mathbb{T}^2)$, traditionally denoted by 
\begin{equation*}
L(\log^+L)^2(\log^+\log^+\log^+L)(\mathbb{T}^2), 
\end{equation*}
consists of all measurable functions $f$ for which 
\begin{equation*}
\int_{\mathbb{T}^2}\Phi_{2}(|f|/a)<\infty 
\end{equation*}
for some $a>0$. Since $\Phi_{2}(u)=o(u^2)$ as $u\to\infty$, one has 
\begin{equation*}
L^2(\mathbb{T}^2)\subset L^{\Phi_{2}}(\mathbb{T}^2). 
\end{equation*}

Let $\lambda=\{\lambda_n\}_{n\ge1}$ be a positive sequence. We define the
variable diagonal region 
\begin{equation}  \label{eq:omega-lambda}
\Omega_\lambda:= \left\{(M,L)\in\mathbb{N}^2: \frac{1}{1+\lambda_M}\le \frac{%
M}{L}\le 1+\lambda_L \right\}.
\end{equation}
For a constant sequence this is a fixed cone around the diagonal. If $%
\lambda_n\to0$, the sides of the cone approach the diagonal. The quantity $%
n\lambda_n$ is the effective width of the admissible strip at scale $n$: $%
\lambda_n=O(1/n)$ corresponds to bounded width, while $\sup_n
n\lambda_n=\infty$ allows arbitrarily wide windows at suitable scales.

Antonov proved that bounded width is sufficient for almost everywhere
convergence in the above Orlicz class. In the two-dimensional notation of
the present paper, the relevant consequence is the following.

\begin{antonovthm}
Let $\lambda=\{\lambda_n\}_{n\ge1}$ be positive and nonincreasing, and
assume that $\lambda_n=O(1/n)$. Then, for every $f\in L^{\Phi_{2}}(\mathbb{T}%
^2)$, the rectangular Fourier partial sums converge almost everywhere as $%
(M,L)\in\Omega_\lambda$ and $\min\{M,L\}\to\infty$ \cite[Theorem~2]%
{Antonov2017}.
\end{antonovthm}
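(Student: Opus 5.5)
The plan is to reduce the statement to a region of bounded \emph{additive} width around the diagonal, and then invoke Antonov's convergence theorem in the form in which it is proved in \cite{Antonov2017}: for $f\in L^{\Phi_2}(\mathbb{T}^2)$, the sums $S_{M,L}(f)$ converge a.e.\ as $\min\{M,L\}\to\infty$ with $|M-L|\le C$, for every fixed $C$. Monotonicity of $\lambda$ should play no role, consistent with the remark in the abstract.

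\emph{Step 1: $\Omega_\lambda$ lies in a strip of bounded width.} Suppose $\lambda_n\le C/n$ for all $n$. If $(M,L)\in\Omega_\lambda$, then $M/L\le 1+\lambda_L\le 1+C/L$, which gives $M\le L+C$. Likewise $L/M\le 1+\lambda_M\le 1+C/M$, which gives $L\le M+C$. Hence
\begin{equation*}
\Omega_\lambda\subset\{(M,L)\in\mathbb{N}^2:\ |M-L|\le C\}.
\end{equation*}
This uses only the pointwise bound $\lambda_n=O(1/n)$ and nothing about monotonicity. It therefore suffices to prove a.e.\ convergence of $S_{M,L}(f)$ as $\min\{M,L\}\to\infty$ inside this strip.

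\emph{Step 2: reduction to finitely many diagonals.} The strip is the finite union of the lines $L=M+k$, $|k|\le C$. A sequence converges a.e.\ over a finite union of index sets, all with the same limit $f$, exactly when it converges along each of them. So it is enough to show, for each fixed $k$, that $S_{N,N+k}(f)\to f$ a.e. For $k=0$ this is cubic convergence in $L^{\Phi_2}$, which holds since $\Phi_2$ dominates the Orlicz scales in which the cubic theorems of Sjölin and Antonov apply.

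\emph{Step 3: the shifted diagonals.} For fixed $k>0$, the difference $S_{N,N+k}-S_{N,N}$ is a sum over $2k$ one-dimensional strips $\{|m|\le N,\ n=\pm(N+j)\}$, $1\le j\le k$. My first attempt would be modulation. Convergence of cubic sums is invariant under multiplying $f$ by $\mathrm e(ax+by)$, so every translated cube $[-N+a,N+a]\times[-N+b,N+b]$ with $(a,b)$ bounded also gives a.e.\ convergence to $f$. I would then try to write the indicator of $[-N,N]\times[-N-k,N+k]$ as a fixed finite signed combination of indicators of such translated cubes, plus errors controlled by the maximal operator in Antonov's proof.

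This step is the main obstacle. Translated cubes all have equal side lengths, so inclusion--exclusion alone does not produce a rectangle whose sides differ by $2k$. If that fails, I would not try to repair it by splitting $f$ into quadrant-spectrum pieces with conjugate functions, since that loses a logarithm and may leave $L^{\Phi_2}$. Instead I would rely directly on the cited result, where Antonov establishes the weak-type maximal estimate for $\sup_{|M-L|\le C}|S_{M,L}f|$ in $L^{\Phi_2}$. Density of trigonometric polynomials in $L^{\Phi_2}$ then gives a.e.\ convergence on the strip, and Step 1 finishes the proof.
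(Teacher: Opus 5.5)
The paper gives no proof of Theorem A; it simply quotes Antonov's Theorem~2. Once Steps~2--3 are set aside, your proposal is likewise a reduction to that quoted result, so it stands on the same footing, and the reduction itself is sound.

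Your Step~1 is correct and uses only $\lambda_n\le C/n$. In fact the strip $\{|M-L|\le C\}$ is exactly $\Omega_\mu$ with $\mu_n=C/n$, because $M\le L+C\iff M/L\le 1+\mu_L$ and $L\le M+C\iff M/L\ge 1/(1+\mu_M)$. So your argument is the paper's own observation in Section~\ref{sec:positive-nonmonotone}: majorize $\lambda$ by the single monotone sequence $C/n$ (Lemma~\ref{lem:monotone-majorization}) and apply Antonov's theorem to it (Corollary~\ref{cor:antonov-nonmonotone}). This also confirms that monotonicity is inessential on the positive side, as you anticipated. Consequently you need not attribute to Antonov any particular weak-type maximal inequality on the strip: the strip statement is literally Theorem A for $\mu$.

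Steps~2--3 play no role in the final argument. As you noticed, the modulation idea does not close. Iterating inclusion--exclusion writes $S_{N,N+k}$ through a number of modulated cubic sums that grows like $N/k$, not a bounded number.

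One factual correction in Step~2. Sj\"olin's cubic class $L(\log^+L)^2\log^+\log^+L$ is strictly smaller than $L^{\Phi_2}$, since $\Phi_2(u)=o(u\log^2u\,\log\log u)$, so $\Phi_2$ does not dominate it. Cubic convergence on all of $L^{\Phi_2}$ is Antonov's 2004 theorem, not a consequence of Sj\"olin's.
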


The goal of this paper is to prove that Antonov's aperture condition is
necessary in the $L^2$-scale.

\begin{theorem}
\label{thm:main} Let $\lambda=\{\lambda_n\}_{n\ge1}$ be a positive
nonincreasing sequence such that 
\begin{equation*}
\sup_{n\ge1} n\lambda_n=\infty . 
\end{equation*}
Then there exists $f\in L^2(\mathbb{T}^2)$ such that the limit of $%
S_{M,L}(f;x,y)$, as $(M,L)\in\Omega_\lambda$ and $\min\{M,L\}\to\infty$,
does not exist for almost every $(x,y)\in\mathbb{T}^2$.
\end{theorem}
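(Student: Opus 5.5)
Since $\sup_n n\lambda_n=\infty$, I first extract a rapidly increasing sequence of scales $N_k$ with $N_k\lambda_{N_k}\ge W_k$, where $W_k\to\infty$ is as slow or fast as needed. Monotonicity of $\lambda$ then gives $\lambda_n\ge W_k/N_k$ for all $n\le N_k$. Consequently every pair $(M,L)$ with $N_k/2\le M,L\le N_k$ and $|M-L|\le c\,W_k$ lies in $\Omega_\lambda$. So at scale $N_k$ the admissible region contains a strip of width $\asymp W_k$ around the diagonal. After shifting the frequency origin to $N_k$, this strip looks like all rectangles $[0,a)\times[0,b)$ with $a,b\le W_k$ (up to a common diagonal shift).

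The first step is a Fefferman-type analytic block. For each $W$, I would construct a polynomial $P_W$ with spectrum in $[0,W)^2$ satisfying two conditions: $\|P_W\|_{L^2(\mathbb T^2)}\le 1$, and
\[
\max_{a,b\le W}|R_{a,b}(P_W;x,y)|\ge c\,\beta(W)
\]
on a set of measure $\ge c$, where $\beta(W)\to\infty$. The natural candidate is Fefferman's example, essentially $\mathrm e(\phi(x,y))$ with $\phi$ a quadratic phase like $Wxy$ truncated to a frequency square, smoothed and analytically projected. For it, the rectangle $[0,a)\times[0,b)$ with $a\approx b$ and suitably chosen slopes captures a stationary-phase portion, giving partial sums of size $\sim\log W$ against an $L^2$ norm of $O(1)$. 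The constraint $|a-b|\lesssim W$ is automatically met inside $[0,W)^2$.

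Next I modulate the blocks and add random translations. I set $Q_k(x,y)=\mathrm e(m_kx+n_ky)\,P_{W_k}(x+\xi_k,y+\eta_k)$ with base frequency $m_k=n_k\approx N_k-W_k$, so that the spectrum sits inside the admissible strip. Then I put $f=\sum_k \beta(W_k)^{-1}\varepsilon_k Q_k$ with $\sum_k\beta(W_k)^{-2}<\infty$ but $\beta(W_k)^{-1}\beta(W_k)=1$; I may repeat a block several times at distinct scales to fix normalisations, so $f\in L^2$. Frequency separation ($N_{k+1}\gg N_k$ with disjoint supports) ensures that a partial sum $S_{M,L}f$ with $(M,L)$ near scale $N_k$ equals the full contribution of lower blocks, plus the relevant rectangular truncation of block $k$, plus nothing from higher blocks. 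Hence the oscillation of $S_{M,L}f$ over admissible $(M,L)$ at scale $k$ is $\ge\beta(W_k)^{-1}\max|R_{a,b}P_{W_k}|\ge c$ on a set $E_k$ of measure $\ge c$. The symmetric sums include negative frequencies, but those coefficients vanish because every block has nonnegative spectrum, so \eqref{eq:S-R-relation} applies.

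Finally I need $\limsup_k \mathbf 1_{E_k}=1$ almost everywhere, which is where the independent random translations come in. Choosing $(\xi_k,\eta_k)$ i.i.d.\ uniform on $\mathbb T^2$, the sets $E_k$ become independent translates, each of measure $\ge c$. By Fubini and Borel--Cantelli (second lemma), for almost every choice of translations, almost every $(x,y)$ lies in infinitely many $E_k$; I then fix one good realization. This yields oscillation $\ge c$ infinitely often, so the limit fails to exist. The main obstacle I expect is the first step: proving a genuinely unbounded lower bound $\beta(W)\to\infty$ for Fefferman's block in $L^2$ normalisation while restricting to rectangles of bounded eccentricity near the diagonal. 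If that fails, I would instead aim for $\beta$ bounded below and gain divergence by stacking many independent copies with amplitudes forming an $\ell^2$ but non-$\ell^\infty$-null pattern, which still suffices for nonconvergence.
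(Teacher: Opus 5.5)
Your architecture matches the paper's. You use an $L^2$-normalized chirp block placed in a frequency square just below a scale where $n\lambda_n$ is large, so that monotonicity puts every index pair of that square into $\Omega_\lambda$. You weight it by about $1/\log W_k$, which is square-summable yet cancels the block's growth. You then apply independent uniform translations with the second Borel--Cantelli lemma and Fubini. Measuring the oscillation between $(m_k-1,n_k-1)$, where block $k$ contributes nothing, and an active index, and concluding by the Cauchy criterion, is correct. It is also leaner than the paper, which additionally proves via Kolmogorov's theorem on random series that $\sum_k H_k^{\omega}(x,y)$ converges a.e.; non-existence of the limit does not need that. One constant needs adjusting: your strip has admissible width about $W_k/2$, not $W_k$. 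So either put the block in a square of side $W_k/2$, or require $N_k\lambda_{N_k}\ge 2W_k$, together with $W_k\le N_k/2$.

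The genuine gap is the step you flag yourself: the block estimate $\max_{a,b\le W}|R_{a,b}(P_W;x,y)|\ge c\log W$ on a set of measure $\ge c$, with $\|P_W\|_2\le 1$, is never proved.

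\begin{itemize}
\item It is the only nontrivial analytic input, and it cannot be bypassed. The guaranteed oscillation at scale $k$ is the weight times $\beta(W_k)$, and $\ell^2$ weights tend to zero, so a bounded $\beta$ yields nothing. An $\ell^2$ sequence that is not null does not exist, so your fallback fails.
\item Your stationary-phase heuristic also leaves open which rectangle to use.
\end{itemize}

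The paper closes this with an exact computation.

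\begin{itemize}
\item Take $F_N$ with coefficients $N^{-1}\mathrm e(-mn/N)$ on $[0,N)^2$, so $\|F_N\|_2=1$.
\item Put the corner of the rectangle at the stationary point of the phase $mx+ny-mn/N$, namely $a=\lfloor Ny\rfloor$, $b=\lfloor Nx\rfloor$. The rectangle is determined by the point, not by near-squareness.
\item Sum the geometric series in $n$, substitute $m=a-r$, and use $1/\bigl(N(1-\mathrm e(\theta))\bigr)=i/(2\pi N\theta)+O(1/N)$.
\item One boundary term is then non-oscillatory and contributes $\frac{1}{2\pi}\sum_{r\le a}\frac{1}{\beta+r}\ge \frac{1}{2\pi}\log N-C$, where $\beta=Ny-a$.
\item The other term, $\sum_{r\le a}\mathrm e(-(\beta+r)x)/(\beta+r)$, is bounded by Abel summation, because $x\in[1/4,1/2]$ gives $|1-\mathrm e(-x)|\ge\sqrt2$.
\end{itemize}

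This yields $|R_{a,b}(F_N;x,y)|\ge\frac{1}{4\pi}\log N$ on $[1/4,1/2]\times[1/8,1/4]$, a set of measure $1/32$. With that lemma supplied, the rest of your argument goes through.
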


Combining Theorem~\ref{thm:main} with Antonov's theorem gives the following
sharp criterion in Antonov's Orlicz class, and hence in particular in the
$L^2$-scale, under the monotonicity assumption.

\begin{theorem}
\label{thm:characterization} Let $\lambda=\{\lambda_n\}_{n\ge1}$ be positive
and nonincreasing. The following assertions are equivalent:

\begin{enumerate}
\item for every $f\in L^{\Phi _{2}}(\mathbb{T}^{2})$, the sums $%
S_{M,L}(f;x,y)$ converge for almost every $(x,y)$ as $(M,L)\in \Omega
_{\lambda }$ and $\min \{M,L\}\rightarrow \infty $;

\item $\lambda _{n}=O(1/n)$.
\end{enumerate}
\end{theorem}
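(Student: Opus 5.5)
The equivalence in Theorem~\ref{thm:characterization} follows by combining Theorem~\ref{thm:main} with Antonov's theorem; no new analysis is needed, only a short reduction in each direction.

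For (2)$\Rightarrow$(1), assume $\lambda$ is positive, nonincreasing and $\lambda_n=O(1/n)$. These are exactly the hypotheses of Theorem~A. That theorem gives almost everywhere convergence of the rectangular partial sums of every $f\in L^{\Phi_2}(\mathbb{T}^2)$ as $(M,L)\in\Omega_\lambda$ and $\min\{M,L\}\to\infty$. This is assertion (1). The only point to verify is that the partial sums in Antonov's statement agree with the symmetric sums \eqref{eq:symmetric-sums}. The region \eqref{eq:omega-lambda} is written with the same parametrisation, so I would check this directly. If Antonov's cone is instead phrased as $|M-L|\le C$ or $|M/L-1|\le C/\min\{M,L\}$, I would note that $\lambda_n\le C/n$ turns the condition defining $\Omega_\lambda$ into $|M-L|\le C'$, so the two regions coincide up to a change of constants.

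For (1)$\Rightarrow$(2), argue by contraposition. Suppose $\lambda_n\neq O(1/n)$. Since $\lambda_n>0$, this is the same as $\sup_n n\lambda_n=\infty$. Theorem~\ref{thm:main} then supplies $f\in L^2(\mathbb{T}^2)$ for which the limit of $S_{M,L}(f;x,y)$ along $\Omega_\lambda$ fails to exist for almost every $(x,y)$. The introduction records that $\Phi_2(u)=o(u^2)$, so $\Phi_2(u)\le C(1+u^2)$. On the probability space $\mathbb{T}^2$ this gives $\int\Phi_2(|f|)<\infty$, so $L^2\subset L^{\Phi_2}$ and $f\in L^{\Phi_2}(\mathbb{T}^2)$. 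Hence (1) fails.

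Neither direction contains a real obstacle. The whole difficulty sits in Theorem~\ref{thm:main}, which is assumed here, and the only care needed is the notational reconciliation with Antonov's paper described above. I would add the remark that the same argument shows (1) is also equivalent to the $L^2$ version of (1): (2) implies almost everywhere convergence for all of $L^{\Phi_2}\supset L^2$, and the counterexample built when (2) fails already lies in $L^2$.
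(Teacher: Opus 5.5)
Your proof is correct and follows the paper's argument exactly. Direction (2)$\Rightarrow$(1) is Theorem~A verbatim. Direction (1)$\Rightarrow$(2) is the contrapositive via Theorem~\ref{thm:main} and $L^2(\mathbb{T}^2)\subset L^{\Phi_2}(\mathbb{T}^2)$, using that for positive $\lambda_n$ the failure of $\lambda_n=O(1/n)$ is equivalent to $\sup_n n\lambda_n=\infty$. In your notational aside, $\lambda_n\le C/n$ gives only the inclusion $\Omega_\lambda\subseteq\{(M,L):|M-L|\le C\}$, not coincidence of the two regions, but that inclusion is all that direction (2)$\Rightarrow$(1) needs.
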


The proof of Theorem~\ref{thm:main} begins with a Fefferman-type analytic
polynomial whose mixed rectangular truncations are large on a set of
positive measure. These blocks are shifted to pairwise disjoint frequency
rectangles. The condition $\sup_n n\lambda_n=\infty$ is used to choose the
scales so that the active partial sums remain inside $\Omega_\lambda$.
Finally, independent random translations guarantee that almost every point
belongs to infinitely many translated large-value sets, while the resulting
block series still converges almost everywhere. Fixing one good random parameter gives
the required deterministic function.

\section{A Fefferman-type block polynomial}

For $N\in\mathbb{N}$ let 
\begin{equation}  \label{eq:FN}
F_N(s,t):=\frac1N\sum_{a=0}^{N-1}\sum_{b=0}^{N-1} \mathrm{e}%
\!\left(-ab/N\right)\mathrm{e}\!\left(as\right)\mathrm{e}\!\left(bt\right),
\qquad (s,t)\in\mathbb{T}^2.
\end{equation}

\begin{proposition}
\label{prop:block} For every $N\in \mathbb{N}$, 
\begin{equation*}
\left\Vert F_{N}\right\Vert _{L^{2}(\mathbb{T}^{2})}=1.
\end{equation*}%
Moreover, let 
\begin{equation*}
\Omega _{0}:=\left[ \frac{1}{4},\frac{1}{2}\right] \times \left[ \frac{1}{8},%
\frac{1}{4}\right] \subset \mathbb{T}^{2}.
\end{equation*}%
There exist absolute constants $c_{\ast},c_0>0$ and $N_0\in\mathbb{N}$ such
that, for all $N\ge N_0$, 
\begin{equation}
\left|S_{\lfloor Ny\rfloor-1,\lfloor Nx\rfloor-1}(F_N;x,y)\right| \ge
c_{\ast}\log N, \qquad (x,y)\in\Omega_0,  \label{eq:block-large-S}
\end{equation}
and consequently 
\begin{equation*}
\mu_2\left\{(x,y)\in\mathbb{T}^2: \left|S_{\lfloor Ny\rfloor-1,\lfloor
Nx\rfloor-1}(F_N;x,y)\right| \ge c_{\ast}\log N\right\} \ge c_0.
\end{equation*}
One may take $c_{\ast}=1/(4\pi)$ and $c_0=1/32$.
\end{proposition}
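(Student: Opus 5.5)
The norm identity is Parseval: the coefficients of $F_N$ are $\widehat F_N(a,b)=N^{-1}\mathrm e(-ab/N)$ for $0\le a,b\le N-1$ and vanish otherwise, so $\|F_N\|_2^2=N^2\cdot N^{-2}=1$. Since the spectrum of $F_N$ lies in $\{a,b\ge0\}$, \eqref{eq:S-R-relation} gives, for $(x,y)\in\Omega_0$ and with $M:=\lfloor Ny\rfloor$, $L:=\lfloor Nx\rfloor$,
\begin{equation*}
S_{M-1,L-1}(F_N;x,y)=R_{M,L}(F_N;x,y)=\frac1N\sum_{a=0}^{M-1}\mathrm e(ax)\sum_{b=0}^{L-1}\mathrm e\bigl(b(y-a/N)\bigr).
\end{equation*}
Note the crossing of the indices: the bound $M$ on $a$ depends on $y$, and the bound $L$ on $b$ depends on $x$. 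The measure statement then follows from $\mu_2(\Omega_0)=\frac14\cdot\frac18=\frac1{32}$, so the whole task is the pointwise bound \eqref{eq:block-large-S}.

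Put $d_a:=y-a/N$. For $0\le a\le M-1$ we have $d_a\in[1/N,\,1/4]$, so $\mathrm e(d_a)\ne1$ and the inner geometric sum equals
\begin{equation*}
\frac{\mathrm e(Ld_a)-1}{\mathrm e(d_a)-1}.
\end{equation*}
Using $\mathrm e(ax)\,\mathrm e(Ld_a)=\mathrm e(Ly)\,\mathrm e(a\theta)$ with $\theta:=x-L/N\in[0,1/N)$, I split $R_{M,L}=\mathrm e(Ly)\,A-B$, where
\begin{equation*}
A=\frac1N\sum_{a<M}\frac{\mathrm e(a\theta)}{\mathrm e(d_a)-1},\qquad
B=\frac1N\sum_{a<M}\frac{\mathrm e(ax)}{\mathrm e(d_a)-1}.
\end{equation*}

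\emph{Main term $A$.} Write $k:=M-a$. Then $d_a=(k-1)/N+\{Ny\}/N\in[(k-1)/N,\,k/N]$ for $k\ge2$, and $d_a\ge 1/N$ for $k=1$. I replace $\mathrm e(a\theta)$ by $\mathrm e(M\theta)$: the error is $O(k/N)$, and after multiplying by $1/(N|\mathrm e(d_a)-1|)\lesssim 1/k$ and summing over $k\le M$ it contributes $O(M/N)=O(1)$. Next I replace $1/(\mathrm e(d)-1)$ by $1/(2\pi i d)$; the difference is bounded for $0<d\le1/4$, so after the factor $1/N$ this is another $O(1)$. What remains is
\begin{equation*}
\mathrm e(M\theta)\,\frac{1}{2\pi i}\sum_{k=1}^{M}\frac{1}{N d_{M-k}},
\end{equation*}
which up to $O(1)$ equals $\frac{1}{2\pi}\log M\ge\frac1{2\pi}\log(N/8)-O(1)$ in modulus. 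Hence $|A|\ge\frac1{2\pi}\log N-C$.

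\emph{Oscillatory term $B$.} This is the step needing care. Here $x\in[1/4,1/2]$ keeps the phase $\mathrm e(ax)$ genuinely oscillating, with $|1-\mathrm e(x)|\ge\sqrt2$. I would apply Abel summation in $a$. The partial sums $\sum_{a\le j}\mathrm e(ax)$ are bounded by $1/|\sin\pi x|\le\sqrt2$. The weights $w_a:=1/(N(\mathrm e(d_a)-1))$ have total variation $\sum_a|w_{a+1}-w_a|+|w_{M-1}|=O(1)$: indeed $w_a=1/(2\pi i N d_a)+O(1/N)$ with a smooth $O(1/N)$ correction, $Nd_a$ is an arithmetic progression of step $1$ starting at $\ge1$, and the last weight is $O(1)$. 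This gives $|B|\le C$.

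\emph{Conclusion.} Combining the two estimates, $|S_{M-1,L-1}(F_N;x,y)|\ge\frac1{2\pi}\log N-C'\ge\frac1{4\pi}\log N$ once $N\ge N_0:=\lceil e^{4\pi C'}\rceil$. The main obstacle is keeping every error uniform in $(x,y)\in\Omega_0$, in particular near $k=1$, where $d_a$ may be as small as $1/N$; the lower bound $d_a\ge 1/N$ and the bounds $y\le1/4$, $x\in[1/4,1/2]$ are exactly what make these constants absolute.
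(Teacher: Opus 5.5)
Your proof is correct and follows essentially the paper's route. You sum the geometric series in $b$, split into a non-oscillating harmonic main term of size $\frac{1}{2\pi}\log N$ (after replacing $1/(\mathrm{e}(d)-1)$ by $1/(2\pi i d)$ at bounded cost) and an oscillatory term bounded by Abel summation using $x\in[1/4,1/2]$; the only cosmetic difference is that the paper first replaces the kernel $1/(N(1-\mathrm{e}(\theta_r)))$ by $i/(2\pi(\beta+r))+O(1/N)$ and then applies Abel summation to those weights, whereas you bound the total variation of the exact weights $w_a$.

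One harmless slip: with $k=M-a$ one has $N d_{M-k}=k+\{Ny\}$, not $k-1+\{Ny\}$, so in fact $d_{M-k}\ge k/N$ for every $k$, which only makes your estimates cleaner.
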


\begin{proof}
The Fourier coefficients of $F_N$ are 
\begin{equation*}
\widehat F_N(m,n)= 
\begin{cases}
N^{-1}\mathrm{e}\!\left(-mn/N\right), & 0\le m,n\le N-1, \\ 
0, & \text{otherwise.}%
\end{cases}%
\end{equation*}
Thus Parseval's identity gives 
\begin{equation*}
\left\|F_N\right\|_{L^2(\mathbb{T}^2)}^2 =\sum_{m,n\in\mathbb{Z}%
}\left|\widehat F_N(m,n)\right|^2
=\sum_{m=0}^{N-1}\sum_{n=0}^{N-1}\frac1{N^2}=1.
\end{equation*}

Let 
\begin{equation*}
M:=\left\lfloor Ny\right\rfloor ,\qquad L:=\left\lfloor Nx\right\rfloor
,\qquad \alpha :=Nx-L\in \lbrack 0,1),\qquad \beta :=Ny-M\in \lbrack 0,1).
\end{equation*}%
Then, by the definition of the auxiliary sums $R_{M,L}$, 
\begin{equation}
R_{M,L}(F_{N};x,y)=\frac{1}{N}\sum_{a=0}^{M-1}\sum_{b=0}^{L-1}\mathrm{e}%
\!\left( ax+by-ab/N\right) .  \label{eq:R-first}
\end{equation}%
Summing first in $b$ gives 
\begin{equation*}
R_{M,L}(F_{N};x,y)=\frac{1}{N}\sum_{a=0}^{M-1}\mathrm{e}\!\left( ax\right) 
\frac{1-\mathrm{e}\!\left( L(y-a/N)\right) }{1-\mathrm{e}\!\left(
y-a/N\right) }.
\end{equation*}%
Put $a=M-r$, $r=1,\dots ,M$. Since 
\begin{equation*}
y-\frac{M-r}{N}=\frac{\beta +r}{N},\qquad M=Ny-\beta ,\qquad L=Nx-\alpha ,
\end{equation*}%
we obtain the exact identity 
\begin{equation}
R_{M,L}(F_{N};x,y)=\frac{\mathrm{e}\!\left( Nxy\right) }{N}\sum_{r=1}^{M}%
\frac{\mathrm{e}\!\left( -(\beta +r)x\right) -\mathrm{e}\!\left( -\alpha
(\beta +r)/N\right) }{1-\mathrm{e}\!\left( (\beta +r)/N\right) }.
\label{eq:exact-identity}
\end{equation}

Fix $(x,y)\in\Omega_0$. Then $N/8-1\le M\le N/4$. For $1\le r\le M$ set 
\begin{equation*}
\theta_r:=\frac{\beta+r}{N}.
\end{equation*}
For $N\ge8$ we have $0<\theta_r\le3/8$. We next justify the two uniform
estimates that will be used in \eqref{eq:exact-identity}.

First, consider the reciprocal of $1-\mathrm{e}\!\left(\theta\right)$. As $%
\theta\to0$, 
\begin{equation*}
\mathrm{e}\!\left(\theta\right)=e^{2\pi i\theta} =1+2\pi
i\theta-2\pi^2\theta^2+O(\theta^3),
\end{equation*}
and therefore 
\begin{equation*}
1-\mathrm{e}\!\left(\theta\right) =-2\pi i\theta+2\pi^2\theta^2+O(\theta^3)
=-2\pi i\theta\left(1+i\pi\theta+O(\theta^2)\right).
\end{equation*}
Thus 
\begin{equation*}
\begin{aligned} \frac1{1-\et{\theta}} &=\frac1{-2\pi i\theta}
\left(1+i\pi\theta+O(\theta^2)\right)^{-1} \\ &=\frac{i}{2\pi\theta}
\left(1-i\pi\theta+O(\theta^2)\right) \\
&=\frac{i}{2\pi\theta}+\frac12+O(\theta). \end{aligned}
\end{equation*}
Hence the function 
\begin{equation*}
G(\theta):=\frac1{1-\mathrm{e}\!\left(\theta\right)}-\frac{i}{2\pi\theta}
\end{equation*}
satisfies 
\begin{equation*}
G(\theta)=\frac12+O(\theta) \qquad (\theta\to0).
\end{equation*}
The singular terms in the two summands defining $G$ have therefore
cancelled. Defining 
\begin{equation*}
G(0):=\frac12
\end{equation*}
makes $G$ continuous at $0$. On $(0,3/8]$ the function is continuous as
well, since $\mathrm{e}\!\left(\theta\right)\neq1$ there. Consequently $G$
is continuous on the compact interval $[0,3/8]$, and so there exists an
absolute constant $C_G$ such that 
\begin{equation*}
|G(\theta)|\le C_G, \qquad 0\le \theta\le \frac38.
\end{equation*}
It follows that, uniformly for all $r$ with $1\le r\le M$, 
\begin{equation*}
\begin{aligned} \frac1{N(1-\et{\theta_r})}
&=\frac1N\left(\frac{i}{2\pi\theta_r}+G(\theta_r)\right) \\ &=\frac{i}{2\pi
N\theta_r}+O\!\left(\frac1N\right) \\
&=\frac{i}{2\pi(\beta+r)}+O\!\left(\frac1N\right), \end{aligned}
\end{equation*}
because $N\theta_r=\beta+r$. The constant in this $O(1/N)$ term is
independent of $r$, $N$, $\alpha$, and $\beta$.

Second, since $0\leq \alpha <1$, Taylor's formula for the exponential gives 
\begin{equation*}
\mathrm{e}\!\left( -\alpha \theta _{r}\right) =e^{-2\pi i\alpha \theta
_{r}}=1+O(\alpha \theta _{r})=1+O\!\left( \frac{\beta +r}{N}%
\right) ,
\end{equation*}%
again uniformly in $r$. To combine these estimates, write 
\begin{equation*}
A_{r}:=\mathrm{e}\!\left( -(\beta +r)x\right) -1,\qquad \delta _{r}:=\mathrm{%
e}\!\left( -\alpha \theta _{r}\right) -1.
\end{equation*}%
Then $|A_{r}|\leq 2$ and $\delta _{r}=O(\theta _{r})=O((\beta +r)/N)$.
Therefore 
\begin{equation*}
\begin{aligned} &\frac{\et{-(\beta+r)x}-\et{-\alpha(\beta+r)/N}}
{N(1-\et{(\beta+r)/N})} \\ &\quad = (A_r-\delta_r)
\left(\frac{i}{2\pi(\beta+r)}+O\!\left(\frac1N\right)\right) \\ &\quad
=\frac{i}{2\pi}\frac{\et{-(\beta+r)x}-1}{\beta+r} +O\!\left(\frac1N\right).
\end{aligned}
\end{equation*}
Thus 
\begin{equation*}
\frac{\mathrm{e}\!\left( -(\beta +r)x\right) -\mathrm{e}\!\left( -\alpha
(\beta +r)/N\right) }{N(1-\mathrm{e}\!\left( (\beta +r)/N\right) )}=\frac{i}{%
2\pi }\frac{\mathrm{e}\!\left( -(\beta +r)x\right) -1}{\beta +r}+O\!\left( 
\frac{1}{N}\right)
\end{equation*}%
uniformly for $1\leq r\leq M$. Summing over $r$ and using $M\leq N/4$, %
\eqref{eq:exact-identity} yields 
\begin{equation}
R_{M,L}(F_{N};x,y)=\frac{i\mathrm{e}\!\left( Nxy\right) }{2\pi }%
\sum_{r=1}^{M}\frac{\mathrm{e}\!\left( -(\beta +r)x\right) -1}{\beta +r}%
+O(1),  \label{eq:R-asymptotic}
\end{equation}%
where the $O(1)$ term is absolute and uniform on $\Omega _{0}$.
Consequently, 
\begin{equation}
\begin{aligned} \left|R_{M,L}(F_N;x,y)\right| &\ge
\frac1{2\pi}\sum_{r=1}^{M}\frac1{\beta+r} -\frac1{2\pi}\left|\sum_{r=1}^{M}
\frac{\et{-(\beta+r)x}}{\beta+r}\right|-C_1. \end{aligned}
\label{eq:R-lower-start}
\end{equation}

We claim that 
\begin{equation}  \label{eq:osc-bound}
\left|\sum_{r=1}^{M}\frac{\mathrm{e}\!\left(-(\beta+r)x\right)}{\beta+r}%
\right| \le C_2, \qquad x\in[1/4,1/2],\quad \beta\in[0,1),\quad M\in\mathbb{N%
}.
\end{equation}
Indeed, 
\begin{equation*}
\sum_{r=1}^{M}\frac{\mathrm{e}\!\left(-(\beta+r)x\right)}{\beta+r} =\mathrm{e%
}\!\left(-\beta x\right)\sum_{r=1}^{M}\frac{\mathrm{e}\!\left(-rx\right)}{%
\beta+r}.
\end{equation*}
Let 
\begin{equation*}
A_m(x):=\sum_{r=1}^{m}\mathrm{e}\!\left(-rx\right).
\end{equation*}
Since $x\in[1/4,1/2]$, 
\begin{equation*}
\left|A_m(x)\right| =\left|\mathrm{e}\!\left(-x\right)\frac{1-\mathrm{e}%
\!\left(-mx\right)}{1-\mathrm{e}\!\left(-x\right)}\right| \le \frac2{\left|1-%
\mathrm{e}\!\left(-x\right)\right|} \le \frac2{2\sin(\pi/4)}=\sqrt2.
\end{equation*}
Abel summation gives 
\begin{equation*}
\sum_{r=1}^{M}\frac{\mathrm{e}\!\left(-rx\right)}{\beta+r} =\frac{A_M(x)}{%
\beta+M} +\sum_{r=1}^{M-1} A_r(x)
\left(\frac1{\beta+r}-\frac1{\beta+r+1}\right),
\end{equation*}
and hence 
\begin{equation*}
\left|\sum_{r=1}^{M}\frac{\mathrm{e}\!\left(-rx\right)}{\beta+r}\right| \le 
\frac{\sqrt2}{\beta+M} +\sqrt2\sum_{r=1}^{M-1}
\left(\frac1{\beta+r}-\frac1{\beta+r+1}\right) \le 2\sqrt2.
\end{equation*}
This proves \eqref{eq:osc-bound}.

On the other hand, 
\begin{equation*}
\sum_{r=1}^{M}\frac{1}{\beta +r}\geq \log N-(\log 8+1).
\end{equation*}%
Combining this with \eqref{eq:R-lower-start} and \eqref{eq:osc-bound}, we
get 
\begin{equation*}
\left\vert R_{M,L}(F_{N};x,y)\right\vert \geq \frac{1}{2\pi }\log
N-C_{3},\qquad (x,y)\in \Omega _{0},
\end{equation*}%
with an absolute constant $C_{3}$. Choosing $N_0$ sufficiently large gives 
\begin{equation*}
\left|R_{M,L}(F_N;x,y)\right| \ge \frac{1}{4\pi}\log N, \qquad
(x,y)\in\Omega_0, \qquad N\ge N_0.
\end{equation*}
Since $F_N$ has spectrum in the nonnegative quadrant, the relation %
\eqref{eq:S-R-relation} gives 
\begin{equation*}
S_{\lfloor Ny\rfloor-1,\lfloor Nx\rfloor-1}(F_N;x,y) =R_{\lfloor
Ny\rfloor,\lfloor Nx\rfloor}(F_N;x,y).
\end{equation*}
This proves \eqref{eq:block-large-S}. The measure estimate follows because $%
\Omega_0$ has measure $1/32$. Thus one may take $c_0=1/32$.
\end{proof}

\section{The shifted blocks}

Since $\sup_{n\geq 1}n\lambda _{n}=\infty $ then there exist strictly
increasing integers $\nu _{k}\rightarrow \infty $ such that 
\begin{align}
\nu _{k}& \geq 32\exp (2k),  \label{eq:nu-large} \\
\nu _{k}\lambda _{\nu _{k}}& \geq 32\exp (2k),  \label{eq:nulambda-large} \\
\nu _{k+1}& \geq 4\nu _{k}.  \label{eq:nu-separated}
\end{align}%
Define 
\begin{equation}
A_{k}:=\left\lfloor \nu _{k}/2\right\rfloor ,\qquad N_{k}:=\left\lfloor \min
\left\{ \frac{A_{k}}{8},\frac{A_{k}\lambda _{\nu _{k}}}{4}\right\}
\right\rfloor .  \label{eq:AkNk}
\end{equation}%
Then, after deleting at most finitely many terms, one has for every $k$, 
\begin{align}
N_{k}& \geq \exp (2k),  \label{eq:N-large} \\
A_{k}+N_{k}& <A_{k+1},  \label{eq:block-separation} \\
A_{k}+\frac{N_{k}}{2}& <\nu _{k},  \label{eq:below-nu} \\
\frac{N_{k}}{2A_{k}}& \leq \frac{\lambda _{\nu _{k}}}{8}.
\label{eq:lambda-small}
\end{align}%
Moreover, we may also assume $N_{k}\geq \max \{N_{0},16\}$, where $N_{0}$ is
the constant from Proposition \ref{prop:block}.

For $k\ge1$ set 
\begin{equation}  \label{eq:Pk}
P_k(x,y):=\mathrm{e}\!\left(A_kx+A_ky\right)F_{N_k}(x,y), \qquad (x,y)\in%
\mathbb{T}^2.
\end{equation}
Then 
\begin{equation*}
\left\|P_k\right\|_{L^2(\mathbb{T}^2)}=\left\|F_{N_k}\right\|_{L^2(\mathbb{T}%
^2)}=1.
\end{equation*}
The spectrum of $P_k$ is contained in 
\begin{equation}  \label{eq:Deltak}
\Delta_k:=\{A_k,A_k+1,\dots,A_k+N_k-1\}^2\subset\mathbb{Z}^2.
\end{equation}
By \eqref{eq:block-separation}, these spectral rectangles are pairwise
disjoint and ordered by increasing frequency.

For $u=(u_{1},u_{2})\in \Omega _{0}$ define the active symmetric radii 
\begin{equation}
M_{k}(u):=A_{k}+\left\lfloor N_{k}u_{2}\right\rfloor -1,\qquad
L_{k}(u):=A_{k}+\left\lfloor N_{k}u_{1}\right\rfloor -1.
\label{eq:active-indices}
\end{equation}

Since $u_1\in[1/4,1/2]$ and $u_2\in[1/8,1/4]$, we have 
\begin{align}
A_k+\frac{N_k}{8}-2 &\le M_k(u)\le A_k+\frac{N_k}{4}-1,  \label{eq:Mk-bounds}
\\
A_k+\frac{N_k}{4}-2 &\le L_k(u)\le A_k+\frac{N_k}{2}-1.  \label{eq:Lk-bounds}
\end{align}
In particular, because $u_2\le u_1$, one has $M_k(u)\le L_k(u)$.

\begin{lemma}
\label{lem:inside-region} For every $k$ and every $u\in\Omega_0$, 
\begin{equation*}
(M_k(u),L_k(u))\in\Omega_\lambda.
\end{equation*}
Moreover, the baseline square radii 
\begin{equation}  \label{eq:baseline}
(p_k,q_k):=(A_k-1,A_k-1)
\end{equation}
belong to $\Omega_\lambda$ for every $k$.
\end{lemma}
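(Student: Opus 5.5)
Both claims reduce to checking the two inequalities in the definition \eqref{eq:omega-lambda} of $\Omega_\lambda$. The baseline claim is immediate. We have $p_k=q_k=A_k-1$, and $A_k\ge 2$ by \eqref{eq:nu-large}, so $p_k\in\mathbb{N}$. The ratio is $p_k/q_k=1$, and $1/(1+\lambda_{p_k})\le 1\le 1+\lambda_{q_k}$ holds because $\lambda$ is positive.

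For the active pair $(M,L)=(M_k(u),L_k(u))$, first note that $M_k(u)\ge 1$ by \eqref{eq:Mk-bounds} together with $N_k\ge 16$. We also have $M\le L$, so $M/L\le 1\le 1+\lambda_L$, and the upper inequality is automatic. The real content is the lower inequality $M/L\ge 1/(1+\lambda_M)$. Equivalently, we must show
\begin{equation*}
L-M\le \lambda_M M .
\end{equation*}

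The plan is to bound $L-M$ from above by a multiple of $N_k$, bound $\lambda_M$ from below by $\lambda_{\nu_k}$, and then use \eqref{eq:lambda-small}.

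\textbf{Gap bound.} From \eqref{eq:Mk-bounds} and \eqref{eq:Lk-bounds},
\begin{equation*}
L-M\le \Bigl(A_k+\frac{N_k}{2}-1\Bigr)-\Bigl(A_k+\frac{N_k}{8}-2\Bigr)\le \frac{N_k}{2}.
\end{equation*}

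\textbf{Monotonicity step.} Again by \eqref{eq:Mk-bounds}, $M\le A_k+N_k/4<\nu_k$, where the last inequality follows from \eqref{eq:below-nu}. Since $\lambda$ is nonincreasing, this gives $\lambda_M\ge\lambda_{\nu_k}$. This is the one place where monotonicity enters, and it is why the scales were arranged so that every active index lies below $\nu_k$.

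\textbf{Main estimate.} We need $\lambda_M M$ to dominate $N_k/2$. By \eqref{eq:lambda-small}, $N_k/2\le \lambda_{\nu_k}A_k/8$. It therefore suffices to have $M\ge A_k/8$, because then
\begin{equation*}
\lambda_M M\ge \lambda_{\nu_k}\frac{A_k}{8}\ge \frac{N_k}{2}\ge L-M .
\end{equation*}
The bound $M\ge A_k/8$ holds with plenty of room: $M\ge A_k+N_k/8-2\ge A_k-1\ge A_k/8$, since $N_k\ge 16$ and $A_k\ge 2$. In fact the factor $1/8$ in \eqref{eq:lambda-small} leaves slack, so the constants are not delicate.

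The only step that needs care is the monotonicity comparison between $\lambda_M$ and $\lambda_{\nu_k}$, which rests on $M<\nu_k$; everything else is bookkeeping with the integer parts already encoded in \eqref{eq:Mk-bounds} and \eqref{eq:Lk-bounds}.
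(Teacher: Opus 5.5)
Your proof is correct and is essentially the paper's argument. The upper inequality is immediate from $M_k(u)\le L_k(u)$. The lower one uses the same ingredients: a lower bound on $M_k(u)$ of size $A_k$, the bound $L_k(u)\le A_k+N_k/2$, \eqref{eq:lambda-small}, and $\lambda_{M_k(u)}\ge\lambda_{\nu_k}$ from \eqref{eq:below-nu} plus monotonicity. Your form $L-M\le\lambda_M M$ is just a rearrangement of the paper's estimate $M/L\ge 1/(1+N_k/(2A_k))\ge 1/(1+\lambda_{\nu_k}/8)$, and your explicit check that the indices lie in $\mathbb{N}$ is a detail the paper leaves implicit.
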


\begin{proof}
Since $M_k(u)\le L_k(u)$, 
\begin{equation*}
\frac{M_k(u)}{L_k(u)}\le1<1+\lambda_{L_k(u)}.
\end{equation*}
It remains to verify the lower bound in \eqref{eq:omega-lambda}. For $%
N_k\ge8 $, \eqref{eq:Mk-bounds} gives $M_k(u)\ge A_k$, while %
\eqref{eq:Lk-bounds} gives $L_k(u)\le A_k+N_k/2$. Hence 
\begin{equation*}
\frac{M_k(u)}{L_k(u)} \ge \frac{A_k}{A_k+N_k/2} =\frac{1}{1+N_k/(2A_k)}.
\end{equation*}
By \eqref{eq:below-nu}, $M_k(u)<\nu_k$. Since $\lambda_n$ is nonincreasing, 
\begin{equation*}
\lambda_{M_k(u)}\ge \lambda_{\nu_k}.
\end{equation*}
Using \eqref{eq:lambda-small}, 
\begin{equation*}
\frac{M_k(u)}{L_k(u)} \ge \frac{1}{1+\lambda_{\nu_k}/8} \ge \frac{1}{%
1+\lambda_{M_k(u)}}.
\end{equation*}
Thus $(M_k(u),L_k(u))\in\Omega_\lambda$.

Finally, $p_k=q_k=A_k-1$ and hence $p_k/q_k=1$. Therefore $%
(p_k,q_k)\in\Omega_\lambda$.
\end{proof}

\section{Random translations and the limiting function}

Let $\mu_2$ denote normalized Haar measure on $\mathbb{T}^2$. We set 
\begin{equation*}
\mathcal{X}:=(\mathbb{T}^2)^{\mathbb{N}},\qquad \mathcal{A}:=\mathcal{B}(%
\mathbb{T}^2)^{\otimes \mathbb{N}},\qquad \mathbb{P}:=\mu_2^{\otimes \mathbb{%
N}}.
\end{equation*}
Thus $(\mathcal{X},\mathcal{A},\mathbb{P})$ is the infinite product
probability space with product Haar measure. An element of $\mathcal{X}$ is
a sequence 
\begin{equation*}
\omega=(u_1,u_2,u_3,\ldots), \qquad u_k=(u_k^1,u_k^2)\in\mathbb{T}^2.
\end{equation*}
For each $k\ge1$ define the coordinate projection 
\begin{equation*}
\tau_k:\mathcal{X}\to\mathbb{T}^2, \qquad \tau_k(\omega)=u_k.
\end{equation*}
We write 
\begin{equation*}
\tau_k=(\tau_k^1,\tau_k^2), \qquad \tau_k^1(\omega)=u_k^1, \qquad
\tau_k^2(\omega)=u_k^2.
\end{equation*}
By the definition of the product measure, each coordinate projection has
distribution $\mu_2$. Thus, for every Borel set $B\subseteq\mathbb{T}^2$, 
\begin{equation*}
\mathbb{P}(\tau_k\in B)=\mu_2(B).
\end{equation*}
Moreover, the coordinate projections are independent. More precisely, let $%
k_1,\ldots,k_m$ be distinct and let $B_1,\ldots,B_m\subseteq\mathbb{T}^2$ be
Borel sets. Then 
\begin{equation*}
\mathbb{P}(\tau_{k_1}\in B_1,\ldots,\tau_{k_m}\in B_m) =\prod_{j=1}^m
\mu_2(B_j).
\end{equation*}
Thus the random variables $\tau_k$ are independent and Haar distributed on $%
\mathbb{T}^2$.

For $\omega\in\mathcal{X}$ define 
\begin{equation}  \label{eq:Hk}
H_k^\omega(x,y):=\frac1k P_k(x+\tau_k^1(\omega),y+\tau_k^2(\omega)).
\end{equation}
For every fixed $\omega$, the functions $H_k^\omega$ have pairwise disjoint
spectra, because translation only multiplies Fourier coefficients by
unimodular constants. Hence they are orthogonal in $L^2(\mathbb{T}^2)$, and 
\begin{equation*}
\left\|H_k^\omega\right\|_{L^2(\mathbb{T}^2)}=\frac1k.
\end{equation*}
Therefore 
\begin{equation*}
\sum_{k=1}^{\infty}\left\|H_k^\omega\right\|_{L^2(\mathbb{T}^2)}^2
=\sum_{k=1}^{\infty}\frac1{k^2}<\infty.
\end{equation*}
Consequently there exists $f^\omega\in L^2(\mathbb{T}^2)$ such that 
\begin{equation}  \label{eq:fomega}
f^\omega=\sum_{k=1}^{\infty}H_k^\omega \qquad\text{in }L^2(\mathbb{T}^2).
\end{equation}

\begin{lemma}
\label{lem:pointwise-random-conv} For every fixed $(x,y)\in\mathbb{T}^2$,
the series 
\begin{equation*}
\sum_{k=1}^{\infty}H_k^\omega(x,y)
\end{equation*}
converges for $\mathbb{P}$-almost every $\omega$. Consequently, 
\begin{equation*}
\mathbb{P}\left( \text{for a.e. }(x,y)\in\mathbb{T}^2,
\sum_{k=1}^{\infty}H_k^\omega(x,y)\text{ converges} \right)=1.
\end{equation*}
\end{lemma}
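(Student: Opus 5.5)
For fixed $(x,y)\in\mathbb{T}^2$, the random variables
\begin{equation*}
X_k(\omega):=H_k^\omega(x,y)=\frac1k P_k\bigl(x+\tau_k^1(\omega),y+\tau_k^2(\omega)\bigr)
\end{equation*}
are independent, since $X_k$ is a Borel function of $\tau_k$ alone and the $\tau_k$ are independent. The plan is to apply Kolmogorov's two-series theorem, or equivalently the $L^2$ martingale convergence theorem, to these complex random variables. Separating real and imaginary parts if one prefers a real-valued version, it suffices to show that $\sum_k\mathbb{E}X_k$ converges and $\sum_k\operatorname{Var}X_k<\infty$.

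The mean is computed directly from translation invariance of Haar measure:
\begin{equation*}
\mathbb{E}X_k=\frac1k\int_{\mathbb{T}^2}P_k(x+u_1,y+u_2)\,d\mu_2(u)=\frac1k\widehat{P_k}(0,0).
\end{equation*}
This vanishes because the spectrum of $P_k$ lies in $\Delta_k$, and $A_k\ge1$ (indeed $A_k$ is huge by \eqref{eq:nu-large}), so $(0,0)\notin\Delta_k$. For the second moment, translation invariance together with $\|P_k\|_{L^2}=1$ gives
\begin{equation*}
\mathbb{E}|X_k|^2=\frac1{k^2}\|P_k\|_{L^2}^2=\frac1{k^2},
\qquad
\sum_k\mathbb{E}|X_k|^2=\sum_k k^{-2}<\infty.
\end{equation*}
The partial sums $\sum_{k\le K}X_k$ therefore form an $L^2$-bounded martingale with respect to the filtration generated by $\tau_1,\dots,\tau_K$. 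Hence they converge $\mathbb{P}$-almost surely; alternatively one can invoke Kolmogorov's one-series theorem for mean-zero independent variables with summable variances.

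For the second assertion, consider the set
\begin{equation*}
E:=\Bigl\{(x,y,\omega):\sum_k H_k^\omega(x,y)\text{ converges}\Bigr\}\subset\mathbb{T}^2\times\mathcal{X}.
\end{equation*}
The map $(x,y,\omega)\mapsto H_k^\omega(x,y)$ is jointly measurable, because $P_k$ is a trigonometric polynomial (hence continuous) and $\tau_k$ is measurable. Convergence of the series can be written as a Cauchy condition with countably many unions and intersections over rational $\varepsilon$ and integer indices, so $E$ is measurable in the product $\sigma$-algebra. By the first part, every $(x,y)$-section of $E$ has $\mathbb{P}$-measure $1$. Tonelli's theorem then gives $(\mu_2\otimes\mathbb{P})(E)=1$, and applying Tonelli again in the other order shows that for $\mathbb{P}$-almost every $\omega$ the $\omega$-section of $E$ has $\mu_2$-measure $1$. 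This is exactly the claimed statement.

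The only step needing care is the measurability and Fubini bookkeeping in the last paragraph; the probabilistic estimate itself is routine once one observes $\mathbb{E}X_k=0$, which follows from the spectral placement of $P_k$ away from the origin.
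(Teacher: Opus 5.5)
Your proof is correct and follows essentially the same route as the paper: independence of the $X_k$, mean zero and $\mathbb{E}|X_k|^2=k^{-2}$ by translation invariance, Kolmogorov's convergence theorem (the paper applies it to the real and imaginary parts separately), and then Fubini to pass to almost every $(x,y)$. You also justify explicitly two points the paper leaves implicit: that $\mathbb{E}X_k=\widehat{P_k}(0,0)/k=0$ because $A_k\ge1$, and that the convergence set is jointly measurable.
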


\begin{proof}
Fix $(x,y)\in\mathbb{T}^2$ and set 
\begin{equation*}
X_k(\omega):=H_k^\omega(x,y) =\frac1k
P_k(x+\tau_k^1(\omega),y+\tau_k^2(\omega)).
\end{equation*}
The random variable $X_k$ is a measurable function of the $k$th coordinate $%
\tau_k$. Since the coordinate projections $\tau_k$ are independent, the
random variables $X_k$ are independent.

Because $\tau _{k}$ is Haar distributed on $\mathbb{T}^{2}$ and Haar measure
is translation invariant, 
\begin{equation*}
\mathbb{E}X_{k}=\frac{1}{k}\int_{\mathbb{T}^{2}}P_{k}(x+u_{1},y+u_{2})%
\,du_{1}\,du_{2}=\frac{1}{k}\int_{\mathbb{T}^{2}}P_{k}(s,t)\,ds\,dt=0.
\end{equation*}%
Similarly, again by translation invariance, 
\begin{equation*}
\mathbb{E}\left\vert X_{k}\right\vert ^{2}=\frac{1}{k^{2}}\int_{\mathbb{T}%
^{2}}\left\vert P_{k}(x+u_{1},y+u_{2})\right\vert ^{2}\,du_{1}\,du_{2}=\frac{%
1}{k^{2}}\int_{\mathbb{T}^{2}}\left\vert P_{k}(s,t)\right\vert ^{2}\,ds\,dt.
\end{equation*}%
Since $\left\Vert P_{k}\right\Vert _{L^{2}(\mathbb{T}^{2})}=1$, it follows
that 
\begin{equation*}
\mathbb{E}\left\vert X_{k}\right\vert ^{2}=\frac{1}{k^{2}}.
\end{equation*}%
Consequently, 
\begin{equation*}
\sum_{k=1}^{\infty }\mathbb{E}\left\vert X_{k}\right\vert
^{2}=\sum_{k=1}^{\infty }\frac{1}{k^{2}}<\infty .
\end{equation*}

We now invoke Kolmogorov's convergence theorem for random series \cite[%
Theorem~2.5.6]{Durrett2019}: if $Y_k$ are independent real-valued random
variables with $\mathbb{E }Y_k=0$ and 
\begin{equation*}
\sum_{k=1}^{\infty}\func{Var}(Y_k)<\infty
\end{equation*}
then the series $\sum_k Y_k$ converges almost surely. Since the variables $%
X_k$ are complex-valued, we apply this theorem separately to their real and
imaginary parts. Write 
\begin{equation*}
X_k = U_k + iV_k, \qquad U_k := \func{Re} X_k, \qquad V_k := \func{Im} X_k.
\end{equation*}
The sequences $(U_k)$ and $(V_k)$ are independent sequences of real-valued
random variables, because they are obtained from the independent variables $%
X_k$ by measurable transformations. Moreover, 
\begin{equation*}
\mathbb{E }U_k=0, \qquad \mathbb{E }V_k=0.
\end{equation*}
Since 
\begin{equation*}
\left|X_k\right|^2=U_k^2+V_k^2,
\end{equation*}
we have $U_k^2\le \left|X_k\right|^2$ and $V_k^2\le \left|X_k\right|^2$. For
any real-valued square-integrable random variable $Y$, 
\begin{equation*}
\func{Var}(Y) = \mathbb{E}(Y-\mathbb{E}Y)^2 = \mathbb{E}Y^2-(\mathbb{E}Y)^2
\le \mathbb{E}Y^2.
\end{equation*}
Therefore 
\begin{equation*}
\func{Var}(\func{Re} X_k) = \func{Var}(U_k) \le \mathbb{E}U_k^2 \le \mathbb{E%
}|X_k|^2,
\end{equation*}
and similarly 
\begin{equation*}
\func{Var}(\func{Im} X_k) = \func{Var}(V_k) \le \mathbb{E}V_k^2 \le \mathbb{E%
}|X_k|^2.
\end{equation*}
Hence 
\begin{equation*}
\sum_{k=1}^{\infty}\func{Var}(\func{Re} X_k)<\infty, \qquad
\sum_{k=1}^{\infty}\func{Var}(\func{Im} X_k)<\infty.
\end{equation*}
Kolmogorov's convergence theorem applies to both real-valued series 
\begin{equation*}
\sum_{k=1}^{\infty}\func{Re} X_k \qquad\text{and}\qquad \sum_{k=1}^{\infty}%
\func{Im} X_k.
\end{equation*}
Thus the complex series $\sum_k X_k$ converges $\mathbb{P}$-almost surely.
Let 
\begin{equation*}
B(x,y,\omega):=\mathbf{1}_{\{\sum_k H_k^\omega(x,y)\text{ converges}\}}.
\end{equation*}
For each fixed $(x,y)$, 
\begin{equation*}
\int_{\mathcal{X}}B(x,y,\omega)\,d\mathbb{P}(\omega)=1.
\end{equation*}
Integrating over $\mathbb{T}^2$ and applying Fubini's theorem gives 
\begin{equation*}
\int_{\mathcal{X}}\mu_2\left( \left\{(x,y):\sum_k H_k^\omega(x,y)\text{
converges}\right\} \right)\,d\mathbb{P}(\omega)=1.
\end{equation*}
Since the integrand takes values in $[0,1]$, it must be equal to $1$ for $%
\mathbb{P}$-almost every $\omega$.
\end{proof}

For each $k$ define the translated bad set 
\begin{equation}  \label{eq:Ekomega}
E_k(\omega):=\{(x,y)\in\mathbb{T}^2:
(x+\tau_k^1(\omega),y+\tau_k^2(\omega))\in\Omega_0\}.
\end{equation}
Then 
\begin{equation*}
\mu_2(E_k(\omega))=\mu_2(\Omega_0)=\frac1{32}.
\end{equation*}

\begin{lemma}
\label{lem:limsup-full} One has 
\begin{equation*}
\mathbb{P}\left(\mu_2\left(\limsup_{k\to\infty}E_k(\omega)\right)=1\right)=1.
\end{equation*}
\end{lemma}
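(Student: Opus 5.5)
The plan is to fix a point, use the independence of the translations $\tau_k$ together with the second Borel--Cantelli lemma, and then pass from ``every fixed point'' to ``almost every point'' by Fubini's theorem, exactly as in the proof of Lemma~\ref{lem:pointwise-random-conv}.

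First fix $(x,y)\in\mathbb{T}^2$. Membership $(x,y)\in E_k(\omega)$ holds exactly when $\tau_k(\omega)\in\Omega_0-(x,y)$. Since $\tau_k$ is Haar distributed, translation invariance gives
\begin{equation*}
\mathbb{P}\bigl((x,y)\in E_k(\omega)\bigr)=\mu_2\bigl(\Omega_0-(x,y)\bigr)=\frac1{32}.
\end{equation*}
Each event $\{(x,y)\in E_k(\omega)\}$ depends only on the $k$th coordinate $\tau_k$. The coordinate projections are independent, so these events are independent in $k$. Their probabilities sum to $\sum_k 1/32=\infty$. The second Borel--Cantelli lemma therefore shows that, for $\mathbb{P}$-almost every $\omega$, the point $(x,y)$ lies in infinitely many $E_k(\omega)$, that is, $(x,y)\in\limsup_k E_k(\omega)$.

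Next we need joint measurability of the set
\begin{equation*}
\mathcal{G}:=\{((x,y),\omega): (x,y)\in\limsup_k E_k(\omega)\}\subset\mathbb{T}^2\times\mathcal{X}.
\end{equation*}
Each set $\{((x,y),\omega):(x,y)+\tau_k(\omega)\in\Omega_0\}$ is the preimage of the Borel set $\Omega_0$ under a jointly measurable map. Hence $\mathcal{G}$, obtained from these sets by countable unions and intersections, is measurable in the product $\sigma$-algebra.

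Now apply Fubini to the indicator of $\mathcal{G}$. For each fixed $(x,y)$ the $\omega$-section has probability $1$, so
\begin{equation*}
\int_{\mathcal{X}}\mu_2\Bigl(\limsup_{k\to\infty}E_k(\omega)\Bigr)\,d\mathbb{P}(\omega)=\int_{\mathbb{T}^2}1\,d\mu_2=1.
\end{equation*}
The integrand takes values in $[0,1]$ and its integral equals $1$. It must therefore equal $1$ for $\mathbb{P}$-almost every $\omega$, which is the claim.

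There is no real obstacle. The only points needing care are the independence of the events for a fixed point, which follows because the $k$th event is a function of $\tau_k$ alone, and the joint measurability required to justify Fubini.
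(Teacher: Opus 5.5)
Your proposal is correct and follows the paper's proof: independence of the events for a fixed point, the second Borel--Cantelli lemma, then Fubini together with the observation that a $[0,1]$-valued function with integral $1$ equals $1$ almost surely. Your explicit check of joint measurability of $\mathcal{G}$, which the paper leaves implicit, is a welcome addition.
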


\begin{proof}
Fix $(x,y)\in\mathbb{T}^2$. The events 
\begin{equation*}
E_k(x,y):=\{\omega\in\mathcal{X}:(x,y)\in E_k(\omega)\}
\end{equation*}
are independent, because $E_k(x,y)$ depends only on $\tau_k$. Also, 
\begin{equation*}
\mathbb{P}(E_k(x,y))=\mu_2(\Omega_0)=\frac1{32}.
\end{equation*}
Hence 
\begin{equation*}
\sum_{k=1}^{\infty}\mathbb{P}(E_k(x,y))=\infty.
\end{equation*}
By the second Borel--Cantelli lemma, 
\begin{equation*}
\mathbb{P}\left((x,y)\in E_k(\omega)\text{ for infinitely many }k\right)=1
\end{equation*}
for every fixed $(x,y)$. Applying Fubini as in Lemma \ref%
{lem:pointwise-random-conv}, we conclude that 
\begin{equation*}
\mathbb{P}\left(\mu_2\left(\limsup_{k\to\infty}E_k(\omega)\right)=1\right)=1.
\end{equation*}
\end{proof}

\section{Proof of the main theorem}

\begin{proof}[Proof of Theorem \protect\ref{thm:main}]
Let 
\begin{equation*}
\mathcal{X}_{\mathrm{conv}}:=\left\{ \omega \in \mathcal{X}%
:\sum_{k=1}^{\infty }H_{k}^{\omega }(x,y)\text{ converges for a.e. }(x,y)\in 
\mathbb{T}^{2}\right\} ,
\end{equation*}%
and let 
\begin{equation*}
\mathcal{X}_{\mathrm{rec}}:=\left\{ \omega \in \mathcal{X}:\mu _{2}\left(
\limsup_{k\rightarrow \infty }E_{k}(\omega )\right) =1\right\} .
\end{equation*}%
By Lemmas \ref{lem:pointwise-random-conv} and \ref{lem:limsup-full}, 
\begin{equation*}
\mathbb{P}(\mathcal{X}_{\mathrm{conv}})=1,\qquad \mathbb{P}(\mathcal{X}_{%
\mathrm{rec}})=1.
\end{equation*}%
Therefore their intersection also has probability one. Indeed, 
\begin{equation*}
(\mathcal{X}_{\mathrm{conv}}\cap \mathcal{X}_{\mathrm{rec}})^{c}=\mathcal{X}%
_{\mathrm{conv}}^{c}\cup \mathcal{X}_{\mathrm{rec}}^{c},
\end{equation*}%
and so 
\begin{equation*}
\mathbb{P}\left( (\mathcal{X}_{\mathrm{conv}}\cap \mathcal{X}_{\mathrm{rec}})^{c}\right) \leq \mathbb{P}(\mathcal{X}_{\mathrm{conv}}^{c})+\mathbb{P}(%
\mathcal{X}_{\mathrm{rec}}^{c})=0.
\end{equation*}%
Hence 
\begin{equation*}
\mathbb{P}(\mathcal{X}_{\mathrm{conv}}\cap \mathcal{X}_{\mathrm{rec}})=1.
\end{equation*}%
In particular, this intersection is nonempty. Choose 
\begin{equation*}
\omega _{0}\in \mathcal{X}_{\mathrm{conv}}\cap \mathcal{X}_{\mathrm{rec}}
\end{equation*}%
and set 
\begin{equation*}
f:=f^{\omega _{0}}\in L^{2}(\mathbb{T}^{2}).
\end{equation*}

Let 
\begin{equation*}
E:=\limsup_{k\to\infty}E_k(\omega_0).
\end{equation*}
Since $\omega_0\in\mathcal{X}_{\mathrm{rec}}$, we have $\mu_2(E)=1$. Let 
\begin{equation*}
B:=\left\{(x,y)\in\mathbb{T}^2: \sum_{k=1}^{\infty}H_k^{\omega_0}(x,y)\text{
converges}\right\}.
\end{equation*}
Since $\omega_0\in\mathcal{X}_{\mathrm{conv}}$, one has $\mu_2(B)=1$. Hence 
\begin{equation*}
\mu_2(E\cap B)=1.
\end{equation*}

Fix $(x,y)\in E\cap B$. Because $(x,y)\in E$, there exist infinitely many $k$
such that 
\begin{equation*}
u_k(x,y):=(x+\tau_k^1(\omega_0),y+\tau_k^2(\omega_0)) \in\Omega_0.
\end{equation*}
For those $k$, write $u_k=(u_{k,1},u_{k,2})$ and set 
\begin{equation}  \label{eq:mkellk}
m_k:=M_k(u_k), \qquad \ell_k:=L_k(u_k).
\end{equation}
By Lemma \ref{lem:inside-region}, 
\begin{equation*}
(m_k,\ell_k)\in\Omega_\lambda, \qquad
(p_k,q_k):=(A_k-1,A_k-1)\in\Omega_\lambda.
\end{equation*}

Because the spectral rectangles $\Delta_j$ are pairwise disjoint and
ordered, the symmetric square partial sum at $(p_k,q_k)$ contains all
earlier blocks and no part of the $k$th block or any later block. Therefore 
\begin{equation}  \label{eq:baseline-sum}
S_{p_k,q_k}(f;x,y)=\sum_{j<k}H_j^{\omega_0}(x,y).
\end{equation}
Since $(x,y)\in B$, the series on the right converges as $k\to\infty$. Hence
the baseline subsequence $S_{p_k,q_k}(f;x,y)$ converges.

For the active indices $(m_k,\ell_k)$, spectral separation gives 
\begin{equation}  \label{eq:active-sum}
S_{m_k,\ell_k}(f;x,y) =\sum_{j<k}H_j^{\omega_0}(x,y)
+S_{m_k,\ell_k}(H_k^{\omega_0};x,y).
\end{equation}
Consequently, 
\begin{equation}  \label{eq:active-baseline-diff}
S_{m_k,\ell_k}(f;x,y)-S_{p_k,q_k}(f;x,y) =S_{m_k,\ell_k}(H_k^{\omega_0};x,y).
\end{equation}

By the definition of $H_{k}^{\omega _{0}}$, by the frequency shift %
\eqref{eq:Pk}, and by the corrected active radii \eqref{eq:active-indices}, 
\begin{equation*}
S_{m_{k},\ell _{k}}(H_{k}^{\omega _{0}};x,y)=\frac{\mathrm{e}\!\left(
A_{k}u_{k,1}+A_{k}u_{k,2}\right) }{k}S_{\left\lfloor
N_{k}u_{k,2}\right\rfloor -1,\left\lfloor N_{k}u_{k,1}\right\rfloor
-1}(F_{N_{k}};u_{k,1},u_{k,2}).
\end{equation*}%
Thus Proposition \ref{prop:block} yields 
\begin{equation}
\left\vert S_{m_{k},\ell _{k}}(H_{k}^{\omega _{0}};x,y)\right\vert \geq 
\frac{c_{\ast }\log N_{k}}{k}.  \label{eq:large-difference}
\end{equation}%
By \eqref{eq:N-large}, $N_{k}\geq \exp (2k)$, and therefore 
\begin{equation*}
\frac{c_{\ast }\log N_{k}}{k}\geq 2c_{\ast }.
\end{equation*}%
Combining this with \eqref{eq:active-baseline-diff}, we get 
\begin{equation}
\left\vert S_{m_{k},\ell _{k}}(f;x,y)-S_{p_{k},q_{k}}(f;x,y)\right\vert \geq
2c_{\ast }  \label{eq:gap}
\end{equation}%
for infinitely many $k$.

Let 
\begin{equation*}
L(x,y):=\sum_{j=1}^{\infty}H_j^{\omega_0}(x,y),
\end{equation*}
which exists because $(x,y)\in B$. The baseline subsequence $%
S_{p_k,q_k}(f;x,y)$ converges to $L(x,y)$. On the other hand, \eqref{eq:gap}
holds for infinitely many $k$. Therefore, for all sufficiently large such $k$%
, 
\begin{align*}
\left|S_{m_k,\ell_k}(f;x,y)-L(x,y)\right| &\ge
\left|S_{m_k,\ell_k}(f;x,y)-S_{p_k,q_k}(f;x,y)\right| \\
&\quad -\left|S_{p_k,q_k}(f;x,y)-L(x,y)\right| \\
&\ge c_{\ast}.
\end{align*}
Thus the active subsequence cannot converge to the same limit as the
baseline subsequence. Since both subsequences have indices in $%
\Omega_\lambda $ and tend to infinity, the symmetric rectangular partial
sums cannot converge along $\Omega_\lambda$ at $(x,y)$.

This holds for every $(x,y)\in E\cap B$, and $\mu_2(E\cap B)=1$. The proof
is complete.
\end{proof}

\section{The positive side without monotonicity}

\label{sec:positive-nonmonotone}

For completeness, we include the simple observation that Antonov's positive
result remains valid for arbitrary apertures satisfying $\lambda_n=O(1/n)$.
Only the negative theorem of the present paper uses monotonicity.

\begin{lemma}[Monotone majorization]
\label{lem:monotone-majorization} Let $\lambda=\{\lambda_n\}_{n\ge1}$ and $%
\mu=\{\mu_n\}_{n\ge1}$ be positive sequences such that $\lambda_n\le\mu_n$
for every $n$. Then 
\begin{equation*}
\Omega_\lambda\subseteq \Omega_\mu . 
\end{equation*}
\end{lemma}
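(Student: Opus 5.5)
This is a direct check from the definition \eqref{eq:omega-lambda}; no earlier result is needed. Fix $(M,L)\in\Omega_\lambda$. By definition this means $M,L\in\mathbb{N}$ and
\begin{equation*}
\frac{1}{1+\lambda_M}\le\frac{M}{L}\le 1+\lambda_L .
\end{equation*}
We need the same two inequalities with $\lambda$ replaced by $\mu$.

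For the upper inequality, the hypothesis $\lambda_L\le\mu_L$ gives $1+\lambda_L\le1+\mu_L$. Hence $M/L\le 1+\mu_L$.

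For the lower inequality, the hypothesis $\lambda_M\le\mu_M$ gives $1+\lambda_M\le 1+\mu_M$. Both sides are positive, because the sequences are positive. The map $t\mapsto 1/t$ is decreasing on $(0,\infty)$, so
\begin{equation*}
\frac{1}{1+\mu_M}\le\frac{1}{1+\lambda_M}\le\frac{M}{L}.
\end{equation*}

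The two inequalities together say exactly that $(M,L)\in\Omega_\mu$. Since $(M,L)$ was arbitrary, $\Omega_\lambda\subseteq\Omega_\mu$.

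Note that the lower constraint uses the index $M$ and the upper one uses $L$. Pointwise comparison $\lambda_n\le\mu_n$ at every $n$ is therefore enough, and no monotonicity of either sequence is used. This is the point of the lemma: it lets us dominate an arbitrary $\lambda_n=O(1/n)$ by the nonincreasing sequence $\mu_n=C/n$, which lies within the scope of Antonov's theorem.
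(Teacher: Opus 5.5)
Your proposal is correct and follows the same argument as the paper: it bounds $M/L\le 1+\lambda_L\le 1+\mu_L$ from above, and uses $1+\lambda_M\le 1+\mu_M$ (inverted, since both are positive) to get $\frac{1}{1+\mu_M}\le\frac{1}{1+\lambda_M}\le\frac{M}{L}$ from below. You also note explicitly that no monotonicity is used, which is exactly how the paper applies this lemma in Corollary~\ref{cor:antonov-nonmonotone}.
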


\begin{proof}
If $(M,L)\in\Omega_\lambda$, then 
\begin{equation*}
\frac{1}{1+\lambda_M}\le \frac{M}{L}\le 1+\lambda_L . 
\end{equation*}
Since $\lambda_M\le\mu_M$ and $\lambda_L\le\mu_L$, we have 
\begin{equation*}
\frac{1}{1+\mu_M}\le \frac{1}{1+\lambda_M}\le \frac{M}{L}  \le
1+\lambda_L\le 1+\mu_L . 
\end{equation*}
Thus $(M,L)\in\Omega_\mu$.
\end{proof}

\begin{corollary}[Antonov's theorem for nonmonotone apertures]
\label{cor:antonov-nonmonotone} Let $\lambda=\{\lambda_n\}_{n\ge1}$ be an
arbitrary positive sequence such that $\lambda_n=O(1/n)$. Then for every $%
f\in L^{\Phi_{2}}(\mathbb{T}^2)$, and hence for every $f\in L^2(\mathbb{T}^2)
$, the sums $S_{M,L}(f;x,y)$ converge for almost every $(x,y)\in\mathbb{T}^2$
as $(M,L)\in\Omega_\lambda$ and $\min\{M,L\}\to\infty$.
\end{corollary}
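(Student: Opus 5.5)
The plan is to reduce the corollary to Antonov's theorem (Theorem A) by dominating the arbitrary aperture with a monotone one and then using Lemma~\ref{lem:monotone-majorization}. Since $\lambda_n=O(1/n)$, there is a constant $C>0$ with $\lambda_n\le C/n$ for all $n\ge1$; we may need to enlarge $C$ to absorb finitely many initial terms, which is harmless because each $\lambda_n$ is finite. We then set $\mu_n:=C/n$. This sequence is positive and nonincreasing, it satisfies $\mu_n=O(1/n)$, and $\lambda_n\le\mu_n$ for every $n$.

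Next we apply Lemma~\ref{lem:monotone-majorization}, which gives $\Omega_\lambda\subseteq\Omega_\mu$. Theorem A applies to $\mu$. So for every $f\in L^{\Phi_2}(\mathbb{T}^2)$ there is a null set $Z_f$ such that, for $(x,y)\notin Z_f$, the limit
\begin{equation*}
\lim S_{M,L}(f;x,y)\quad\text{as }(M,L)\in\Omega_\mu,\ \min\{M,L\}\to\infty
\end{equation*}
exists.

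The remaining step is the elementary fact that convergence along a set of indices implies convergence along any subset of it, to the same limit. This holds provided the subset still contains indices with $\min\{M,L\}$ arbitrarily large. If it does not, the limit statement is vacuous, or trivially true under the usual convention. In our case $\Omega_\lambda$ contains the whole diagonal $\{(n,n)\}$, so it is unbounded in $\min\{M,L\}$, and the restriction is genuine. Hence the sums converge along $\Omega_\lambda$ off $Z_f$.

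Finally, the statement for $L^2$ follows from the inclusion $L^2(\mathbb{T}^2)\subset L^{\Phi_2}(\mathbb{T}^2)$ noted in the introduction.

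There is no real obstacle here. The only points needing care are choosing the majorant to be monotone with a single uniform constant, and checking that the definition of $\Omega$ is monotone in the aperture sequence, which is exactly the content of the lemma.
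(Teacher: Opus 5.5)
Your proposal is correct and matches the paper's proof step for step: majorize $\lambda_n\le C/n=:\mu_n$, use Lemma~\ref{lem:monotone-majorization} to get $\Omega_\lambda\subseteq\Omega_\mu$, apply Theorem A to $\mu$, restrict to the smaller index set, and pass to $L^2$ via $L^2\subset L^{\Phi_2}$. Your two extra remarks are correct points the paper leaves implicit: enlarging $C$ to cover the finitely many initial terms, and noting that the diagonal lies in $\Omega_\lambda$, so the restricted limit is not vacuous.
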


\begin{proof}
Choose $C>0$ such that $\lambda_n\le C/n$ for every $n$, and set $\mu_n=C/n$%
. Then $\mu$ is positive, nonincreasing, and satisfies $\mu_n=O(1/n)$. By
Lemma~\ref{lem:monotone-majorization}, $\Omega_\lambda\subseteq\Omega_\mu$.
Antonov's theorem applied to $\mu$ gives almost everywhere convergence along
the larger region $\Omega_\mu$, and therefore along the smaller region $%
\Omega_\lambda$. The final statement for $L^2$ follows from $L^2(\mathbb{T}%
^2)\subset L^{\Phi_{2}}(\mathbb{T}^2)$.
\end{proof}

\section{Remarks and open questions}

\label{sec:remarks-open-questions}

We finish with two questions concerning the sharpness of Theorem~\ref%
{thm:main}.

\subsection*{The nonmonotone case}

Theorem~\ref{thm:main} assumes that $\{\lambda_n\}$ is nonincreasing. This
assumption is natural for the proof, because it allows one to pass from a
large value of $n\lambda_n$ to a whole interval of neighbouring indices
where the aperture remains sufficiently large. If $\lambda_n$ oscillates
strongly, the large values of $n\lambda_n$ may be isolated, and the present
block construction no longer guarantees that the active rectangular indices
remain inside $\Omega_\lambda$.

The positive direction is clear from Corollary~\ref{cor:antonov-nonmonotone}%
: if $\lambda_n=O(1/n)$, almost everywhere convergence holds without any
monotonicity assumption. The open problem is the converse.

\medskip \noindent\textbf{Question 1.} Let $\{\lambda_n\}$ be an arbitrary
positive sequence, not necessarily nonincreasing, and suppose that $\sup_n
n\lambda_n=\infty$. Does there always exist $f\in L^2(\mathbb{T}^2)$ whose
symmetric rectangular Fourier partial sums diverge almost everywhere along $%
\Omega_\lambda$?

\subsection*{Continuous counterexamples}

The function constructed here belongs to $L^2(\mathbb{T}^2)$, but the
construction does not give a continuous function. The reason is that the
argument uses an orthogonal series of frequency blocks: the coefficients
make the $L^2$-norms square summable, but they do not provide uniform
convergence. To obtain a continuous counterexample by the same method, one
would need much stronger uniform control of the blocks while preserving
their large rectangular partial sums. Balancing these two requirements is
the main obstruction.

For fixed cones, divergence phenomena for continuous functions are known
from the work of Bakhvalov; related results for summation methods between
cubic summation and fixed-cone summation were obtained by Antonov \cite%
{Antonov2014,Bakhvalov1997,Bakhvalov2002}. These results suggest that
continuous counterexamples may also exist for variable diagonal regions, but
they do not directly yield the statement below.

\medskip \noindent\textbf{Question 2.} Let $\{\lambda_n\}$ be positive and
nonincreasing, and suppose that $\sup_n n\lambda_n=\infty$. Does there exist
a continuous function $f\in C(\mathbb{T}^2)$ whose symmetric rectangular
Fourier partial sums diverge almost everywhere along $\Omega_\lambda$?

A positive answer would strengthen Theorem~\ref{thm:main} from the $L^2$
category to the continuous category. A negative answer would indicate that
variable diagonal summation has a genuine regularity threshold between $L^2(%
\mathbb{T}^2)$ and $C(\mathbb{T}^2)$.

\end{document}